\theoremstyle{plain}
	\newtheorem{theorem}{Theorem}
	\newtheorem{lemma}[theorem]{Lemma}
\theoremstyle{definition} 
	\newtheorem{remark}[theorem]{Remark}
	\newtheorem{example}[theorem]{Example}
\def\sgn{\mathop{\mathrm{sgn}}}
\begin{document}
\title{A new upper bound for mutually touching infinite cylinders}
\author[J. Koizumi]{Junnosuke Koizumi}
\address{RIKEN iTHEMS, Wako, Saitama 351-0198, Japan}
\email{junnosuke.koizumi@riken.jp}

\date{\today}
\thanks{}
\subjclass{52C17, 52A40, 05D10}

\begin{abstract}
Let $N$ denote the maximum number of congruent infinite cylinders that can be arranged in $\mathbb{R}^3$ so that every pair of cylinders touches each other.
Littlewood posed the question of whether $N=7$, which remains unsolved.
In this paper, we prove that $N\leq 18$, improving the previously known upper bound of $24$ established by A.\! Bezdek.
\end{abstract}

\maketitle
\setcounter{tocdepth}{1}

\enlargethispage*{20pt}
\thispagestyle{empty}

\section{Introduction}

The following problem was originally posed by Littlewood \cite{Littlewood}:
What is the maximum number of congruent infinite cylinders that can be arranged in $\mathbb{R}^3$ so that any two of them are touching? Is it 7?

This problem can be interpreted as determining the maximum number $N$ of lines in $\mathbb{R}^3$ such that the distance between any two lines is exactly 1.
As Littlewood noted, heuristic arguments based on degrees of freedom suggest that $N = 7$.
A configuration of 6 cylinders can be constructed relatively easily; see e.g. \cite[2.4, Problem 14]{BMP}.
In the early 1990s, Kuperberg proposed a configuration of 8 cylinders, but this was later shown by Ambrus and A.\! Bezdek \cite{Ambrus_Bezdek} not to satisfy the required condition. The currently known best bounds are $7 \leq N \leq 24$. The upper bound was given by A.\! Bezdek \cite{Bezdek} through geometric arguments, while the lower bound was established by Boz\'oki, Lee, and R\'onyai \cite{BLR} using computer-assisted proof techniques.
In this paper, we give the following improved upper bound:

\begin{theorem}\label{main}
    Let $L_1,\dots,L_n$ be lines in $\mathbb{R}^3$ such that
    $$
    d(L_i,L_j):=\min_{x\in L_i,y\in L_j}\|x-y\|=1\quad(1\leq i<j\leq n).
    $$
    Then, we have $n\leq 18$.
\end{theorem}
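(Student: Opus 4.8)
\emph{Overview.} The plan is to turn the metric conditions into a statement about rank-one tensors spanning a subspace on which an explicit quadratic form is nearly totally degenerate, and then to combine an upper bound for the dimension of that span with a lower bound coming from the geometry of line configurations.

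\emph{Plücker reformulation.} Write $L_i=\{p_i+tv_i:t\in\mathbb R\}$ with $v_i\in S^2$, and set $m_i:=p_i\times v_i$, so that $v_i\cdot m_i=0$ and the pair $V_i:=(v_i,m_i)\in\mathbb R^6$ determines $L_i$. Putting $\beta_{ij}:=v_i\cdot v_j$ and $\mu_{ij}:=v_i\cdot m_j+v_j\cdot m_i$, a short computation with the usual distance formula shows that $d(L_i,L_j)=1$ is equivalent to
\[
\mu_{ij}^2+\beta_{ij}^2=1 ,
\]
and --- conveniently --- this identity holds automatically when $i=j$ (since $\beta_{ii}=1$, $\mu_{ii}=0$) and when $L_i\parallel L_j$ (where $\mu_{ij}=0$, $\beta_{ij}=\pm1$). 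Introduce on $\mathbb R^6$ the symmetric bilinear forms $B\bigl((v,m),(v',m')\bigr):=v\cdot m'+v'\cdot m$, which has signature $(3,3)$, and $B'\bigl((v,m),(v',m')\bigr):=v\cdot v'$, which is positive semidefinite of rank $3$. Then the metric data become: $B(V_i,V_j)^2+B'(V_i,V_j)^2=1$ for all $i,j$.

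\emph{Veronese step.} Let $W_i:=V_i\otimes V_i$ in the $21$-dimensional space $\mathrm{Sym}^2\mathbb R^6$, and let $\Phi:=\mathrm{Sym}^2B+\mathrm{Sym}^2B'$, the symmetric bilinear form characterized by $\Phi(V\otimes V,V'\otimes V')=B(V,V')^2+B'(V,V')^2$. The previous identity says precisely that $\Phi(W_i,W_j)=1$ for all $i,j$. In particular, pairing any linear relation $\sum_ic_iW_i=0$ with some $W_j$ gives $\sum_ic_i=0$, so $\Phi$ descends to $S:=\mathrm{span}\{W_1,\dots,W_n\}$ as a form whose Gram matrix on a spanning set is the all-ones matrix; hence $\Phi|_S$ has rank $1$, its radical is a totally isotropic subspace of $(\mathrm{Sym}^2\mathbb R^6,\Phi)$ of dimension $\dim S-1$, and therefore
\[
\dim S\ \le\ 1+\nu(\Phi),
\]
where $\nu(\Phi)$ denotes the Witt index.

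\emph{Closing the estimate, and the main obstacle.} Two things remain. First, $\nu(\Phi)$ has to be computed: diagonalizing $B$ shows $\mathrm{Sym}^2B$ has signature $(12,9)$, and since $\mathrm{Sym}^2B'\succeq0$ the form $\Phi$ has at most $9$ negative squares, so $\nu(\Phi)\le 9$ and $\dim S\le 10$ (a more careful signature computation may improve this). Second --- and this is the heart of the matter --- one needs a \emph{lower} bound $\dim S\ge n-8$, i.e. a bound on the number of independent linear relations among the $W_i$. A relation $\sum_ic_iV_i\otimes V_i=0$ pins the participating $V_i$ inside a proper subspace of $\mathbb R^6$, forcing the corresponding lines into a low-dimensional linear line congruence --- mutually parallel lines, a bundle of lines meeting a fixed line, a regulus, and so on. The plan is to stratify a hypothetical configuration by the rank of the direction Gram matrix $[\beta_{ij}]$ and by the number of distinct directions, dispose of each such degenerate sub-pattern by direct geometric arguments, and conclude that the space of relations has dimension at most $8$; together with $\dim S\le 10$ this gives $n\le 18$. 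The expected difficulty is entirely in this last step: the reformulation, the Veronese packaging and the Witt-index estimate are essentially formal, whereas ruling out a configuration of $19$ equidistant lines carrying $9$ independent tensor relations --- and pinning the constant down exactly so that the bound is $18$ --- requires the substantive case analysis.
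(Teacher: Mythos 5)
Your reformulation is sound as far as it goes: the identity $d(L_i,L_j)=1\Leftrightarrow \mu_{ij}^2+\beta_{ij}^2=1$ (for non-parallel lines), the signature $(3,3)$ of $B$ on Pl\"ucker coordinates, the rank-one Gram matrix on $S=\mathrm{span}\{W_i\}$, and the estimate $\dim S\leq 1+\nu(\Phi)\leq 10$ are all correct, and the form $B((v,m),(v',m'))=v\cdot m'+v'\cdot m$ is exactly the mechanism this paper exploits as well. But the proof is not complete: the entire content of the theorem has been pushed into the unproved lower bound $\dim S\geq n-8$, and the route you sketch toward it starts from a false premise. A relation $\sum_i c_iV_i\otimes V_i=0$ does \emph{not} pin the participating $V_i$ into a proper subspace of $\mathbb{R}^6$: splitting it as $\sum_{c_i>0}c_iV_iV_i^T=\sum_{c_j<0}|c_j|V_jV_j^T$ only shows that the participants span a subspace of dimension at most $\min(\#\mathrm{pos},\#\mathrm{neg})$, which is no constraint at all once six or more coefficients of each sign occur --- precisely the regime relevant to $n\approx 19$ with $9$ independent relations. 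Moreover, since your constraint $\mu_{ij}^2+\beta_{ij}^2=1$ is vacuously satisfied by parallel lines, a pencil of arbitrarily many parallel lines meets every hypothesis of your framework while $\dim S\leq 6$; so the missing geometric input is essential, not a bookkeeping formality, and there is no visible reason the number of independent relations should be exactly $8$ rather than something larger.

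For comparison, the paper avoids needing any lower bound on a span by using an orientation to linearize the constraint before squaring can lose information. Assigning each line a direction and defining the chirality $\varepsilon(L_i,L_j)=\sgn\langle v_i\times v_j,w_i-w_j\rangle$, the condition $d=1$ becomes $\mu_{ij}=\varepsilon(L_i,L_j)\,\|v_i\times v_j\|$. On a monochromatic $K_5$ of the resulting signed graph one gets the single matrix identity $(\mu_{ij})=(\|v_i\times v_j\|)$; the right-hand side has signature $(1,4)$ by a positive-definite-functions argument on the sphere (expanding $\sqrt{1-x^2}$ in even powers of the inner product), while the left-hand side is the Gram matrix of five vectors of $\mathbb{R}^6$ under the signature-$(3,3)$ form $B$ and hence has at most three negative eigenvalues --- a contradiction. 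Ramsey's theorem with $R(4,4)=18$ then converts ``no monochromatic $K_5$'' into $n\leq 18$. If you want to salvage your squared, orientation-free version, you would need to supply the substantive geometric classification of configurations carrying many tensor relations, which is exactly the hard step you have deferred.
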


Our strategy is as follows.
For any two directed lines $L,L'$ in $\mathbb{R}^3$ that are not coplanar, we define their \emph{chirality} $\varepsilon(L,L')\in \{\pm1\}$ as in the figure below.
We have $\varepsilon(L,L')=\varepsilon(L',L)$.
\begin{center}
\begin{tikzpicture}[thick, scale=0.8]
  \draw (-1,-1) -- (1,1);
  \draw(1,-1) -- (0.2, -0.2);
  \draw (-0.2, 0.2) -- (-1, 1);
  \draw (-1, 0.8) -- (-1,1) -- (-0.8, 1);
  \draw (1, 0.8) -- (1,1) -- (0.8, 1);
  \node at (-1.2,-1.2) {$L$};
  \node at (1.2,-1.2) {$L'$};
  \node at (0,-2) {$\varepsilon(L,L')=+1$};
  \begin{scope}[shift={(5,0)}]
  \draw (1,-1) -- (-1,1);
  \draw(-1,-1) -- (-0.2, -0.2);
  \draw (0.2, 0.2) -- (1, 1);
  \draw (-1, 0.8) -- (-1,1) -- (-0.8, 1);
  \draw (1, 0.8) -- (1,1) -- (0.8, 1);
  \node at (0,1.2) {};
  \node at (-1.2,-1.2) {$L$};
  \node at (1.2,-1.2) {$L'$};
  \node at (0,-2) {$\varepsilon(L,L')=-1$};
  \end{scope}
\end{tikzpicture}
\end{center}
Suppose that $L_1,\dots,L_n$ are directed lines in $\mathbb{R}^3$ such that no two of them are coplanar.
Then, we obtain a signed complete graph with $n$ vertices such that the sign on the edge $(i,j)$ is given by $\varepsilon(L_i,L_j)$.
We call this signed complete graph the \emph{chirality graph} of $\{L_1,\dots,L_n\}$.
We prove the following structural result on the chirality graph:

\begin{theorem}\label{directed}
    Let $L_1,\dots,L_n$ be directed lines in $\mathbb{R}^3$ such that no two of them are coplanar.
    If $d(L_i,L_j)=1$ holds for $1\leq i<j\leq n$, then the chirality graph of $\{L_1,\dots,L_n\}$ does not contain a monochromatic $K_5$.
\end{theorem}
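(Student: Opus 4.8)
Suppose toward a contradiction that five of the lines, say $L_1,\dots,L_5$, form a monochromatic $K_5$ in the chirality graph; since all pairwise distances equal $1$, these are five directed, pairwise non-coplanar lines at mutual distance $1$ with a common value of the chirality. A reflection of $\mathbb R^3$ reverses every chirality and fixes every distance, so I may assume the common value is realized in the orientation of the figure; concretely, writing $u_i\in S^2$ for the direction of $L_i$ and choosing any $p_i\in L_i$, and using that $d(L_i,L_j)=|\det(u_i,u_j,p_j-p_i)|/|u_i\times u_j|$ while the sign of $\det(u_i,u_j,p_j-p_i)$ records $\varepsilon(L_i,L_j)$, the hypothesis becomes
\[
\det(u_i,u_j,p_j-p_i)=|u_i\times u_j|\qquad(1\le i<j\le 5).
\]
Here $u_1,\dots,u_5$ are distinct and pairwise non-antipodal because no two of the lines are coplanar. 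The goal is to show this system has no solution $(u_i,p_i)$.

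\textbf{Reduction to a statement about the directions.} First, if the $u_i$ do not span $\mathbb R^3$ they lie in a plane orthogonal to some unit $w$; then each $u_i\times u_j$ is a nonzero multiple of $w$, the equations force $|w\cdot(p_j-p_i)|=1$ for all $i\ne j$, and five real numbers cannot have all pairwise distances $1$ — contradiction. So I may assume the $u_i$ span $\mathbb R^3$, and the relation space $K=\{v\in\mathbb R^5:\sum_i v_iu_i=0\}$ is $2$-dimensional. The key observation is that the left-hand sides are ``balanced'': expanding and using $\sum_i v_iu_i=0$ shows $\sum_{i<j}v_iv_j\det(u_i,u_j,p_j-p_i)\equiv 0$ for every $v\in K$. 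Pairing the system against $v_iv_j$ therefore yields the compatibility conditions
\[
q(v)\ :=\ \sum_{i\ne j}v_iv_j\,|u_i\times u_j|\ =\ \sum_{i\ne j}v_iv_j\sin\theta_{ij}\ =\ 0\qquad\text{for every }v\in K,
\]
where $\theta_{ij}\in(0,\pi)$ is the angle between $u_i$ and $u_j$. So it suffices to prove that $q$ does not vanish identically on $K$.

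\textbf{The analytic input.} The plan for this is to exploit that $\sin\theta$ is a conditionally negative definite kernel on $S^2$: in the Legendre expansion $\sqrt{1-x^2}=\sum_{\ell\ge 0}c_\ell P_\ell(x)$ one has $c_0>0$ and $c_\ell\le 0$ for $\ell\ge 1$. Via the addition theorem, for any $v$ with $\sum_i v_i=0$,
\[
q(v)=\sum_{\ell\ge 2}c_\ell\,\tfrac{4\pi}{2\ell+1}\sum_m\Bigl(\sum_i v_i\,Y_{\ell m}(u_i)\Bigr)^2\ \le\ 0,
\]
and $q(v)=0$ forces $\sum_i v_i Y_{\ell m}(u_i)=0$ for all $\ell\ge 2$, in particular (using the degree-$2$ harmonics) $\sum_i v_i u_iu_i^{\top}=0$. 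Since $K\cap\{\sum_i v_i=0\}$ has dimension $\ge 2+4-5=1$, I would pick a nonzero $v^0$ in it; if $q$ vanished on all of $K$ then $q(v^0)=0$, so $v^0$ would satisfy simultaneously $\sum_i v^0_iu_i=0$ and $\sum_i v^0_iu_iu_i^{\top}=0$. It then remains to rule such a $v^0$ out.

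\textbf{Eliminating $v^0$, and the main obstacle.} Taking the trace of $\sum_i v^0_iu_iu_i^{\top}=0$ gives $\sum_i v^0_i=0$, so $v^0$ has entries of both signs; partition $\{1,\dots,5\}=I^+\sqcup I^-\sqcup I^0$ accordingly ($I^\pm$ nonempty). The positive semidefinite matrix $T:=\sum_{I^+}v^0_iu_iu_i^{\top}=\sum_{I^-}|v^0_i|u_iu_i^{\top}$ has column space equal both to $\operatorname{span}(u_i:i\in I^+)$ and to $\operatorname{span}(u_i:i\in I^-)$. I plan to check all cases for $(|I^+|,|I^-|)$: if some part is a singleton the common span is a line and two $u_i$ coincide or are antipodal — excluded; if some part has size $\ge 3$ the common span is a plane containing all five $u_i$, contradicting that they span $\mathbb R^3$; the remaining case $|I^+|=|I^-|=2$ puts the four relevant $u_i$ on a common great circle, with the two weighted point‑masses on it agreeing to second order — so, since these are \emph{real} measures they also agree on the conjugate moments, and their difference is a nonzero discrete signed measure on four circle points whose moments of orders $-2,\dots,2$ all vanish, which is impossible by a Vandermonde argument. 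The substantive work is concentrated in exactly two places — establishing the conditional negative definiteness of $\sin\theta$ on $S^2$ (equivalently, the non‑positivity of the higher Gegenbauer coefficients of $\sqrt{1-x^2}$), and the $(2,2)$ subcase of the last step, where one must use the reality of the measures to extract enough vanishing moments — and I expect the latter to be the trickiest point; everything else is bookkeeping.
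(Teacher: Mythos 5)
Your proposal is correct in outline and runs on the same two engines as the paper's proof, so the comparison is mostly about packaging. Both arguments use (i) the conditional negative definiteness of the kernel $(u,u')\mapsto\|u\times u'\|$ on the sphere, and (ii) the fact that the touching-plus-monochromatic hypothesis forces the matrix $S=(\|u_i\times u_j\|)_{i,j}$ to be too degenerate for (i) to allow. For (ii) the paper writes $\|v_i\times v_j\|=\langle q_i,v_j\rangle+\langle v_i,q_j\rangle$ with $q_i=w_i\times v_i$, so $S$ is a Gram matrix of a signature-$(3,3)$ form and has at most $3$ negative eigenvalues; your compatibility identity $\sum_{i<j}v_iv_j\det(u_i,u_j,p_j-p_i)=0$ for $v\in K=\{v:\sum_iv_iu_i=0\}$ is the same computation ($t^TSt=2\langle\sum_it_iq_i,\sum_it_iv_i\rangle$ vanishes on $K$), just restricted to an explicit $2$-dimensional isotropic subspace, and it checks out. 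For (i) the paper proves in \Cref{algebraic} that $S$ has signature exactly $(1,4)$, which contradicts the eigenvalue bound immediately; the strictness is obtained by a separating-polynomial trick using all even-degree moments. You instead prove only semi-definiteness and dispose of the equality case $q(v^0)=0$ for a mean-zero $v^0\in K$ by hand, via the vanishing of $\sum_iv^0_i$, $\sum_iv^0_iu_i$, $\sum_iv^0_iu_iu_i^{\top}$, a sign-pattern case analysis, and a circle-moment Vandermonde argument in the $(2,2)$ subcase. I verified that this endgame works, but it is longer than necessary: your expansion actually gives $\sum_iv^0_iF(u_i)=0$ for \emph{every} even polynomial $F$, and evaluating on $F_i=\prod_{j\neq i}f_j^2$ for linear forms $f_j$ vanishing only at $u_j$ yields $v^0=0$ at once, which is exactly the paper's shortcut and makes your case analysis superfluous. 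The one genuine IOU in your write-up is the sign of the Legendre coefficients of $\sqrt{1-x^2}$ (you need $c_\ell\le0$ for all $\ell\ge1$ and, for your equality analysis, $c_2<0$ strictly); you correctly flag this as the substantive analytic input, and it is precisely what \Cref{algebraic} supplies, via the Taylor expansion of $\sqrt{1-x^2}$ and the positive definiteness of the kernels $\langle v,w\rangle^{2k}$ — the paper's own remark notes that the spherical-harmonics formulation you propose is an equivalent route, so this gap is fillable by a classical computation rather than a flaw in the plan.
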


It is easy to deduce \Cref{main} from \Cref{directed} using that the Ramsey number $R(4,4)$ is $18$.
We note that Theorem 2 is mentioned in a paper by the physicists P.\! Pikhitsa and S.\! Pikhitsa \cite{PP}, but their proof is not mathematically rigorous.
Also, it is worth noting that a similar problem involving cylinders of finite length is known as Gardner’s puzzle \cite[Problem 12.1]{Gardner}, and the maximum number of such cylinders that can mutually touch is also unknown.

\subsection*{Acknowledgement}
The author is grateful to Tomoaki Abuku for introducing this problem under the name ``The Middle-Aged Cheers Problem.''\footnote{The name comes from the following metaphor: middle-aged people, lacking youthful energy, prefer to clink glasses with everyone in a single toast, all at once. If each glass is modeled as a cylinder, the problem reduces to finding a configuration where all cylinders are mutually touching. This humorous name was coined by Takeaki Uno.}
The author is grateful to Yuhi Kamio for providing valuable comments on the draft of this paper. The author also thanks Yugo Takanashi for engaging in various discussions on this problem.

\section{An algebraic lemma}
For non-negative integers $a,b,c$, we define a polynomial function $f_{abc}\colon \mathbb{R}^3\to \mathbb{R}$ by
$$
    f_{abc}(x,y,z)=x^ay^bz^c.
$$
An easy computation shows that we have
    $$
        \langle v,w\rangle^n = \sum_{a+b+c=n}\dfrac{n!}{a!b!c!}f_{abc}(v)f_{abc}(w)
    $$
for any $v,w\in \mathbb{R}^3$, where $\langle {-},{-}\rangle$ denotes the standard inner product on $\mathbb{R}^3$.
We prepare the following algebraic lemma, which will be used in the proof of \Cref{directed}:

\begin{lemma}\label{algebraic}
    Let $n\geq 2$. Suppose that $v_1,\dots,v_n\in \mathbb{R}^3$ are unit vectors such that no two of them are parallel.
    Then, the symmetric matrix $S=(\|v_i\times v_j\|)_{i,j}$ is non-singular and has signature $(1,n-1)$.
\end{lemma}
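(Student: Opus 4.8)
The plan is to expand $S$ via the binomial series and reduce the whole statement to positivity of the ``even-power'' Gram matrices $T^{(2k)}:=(\langle v_i,v_j\rangle^{2k})_{i,j}$. Since the $v_i$ are unit vectors, $\|v_i\times v_j\|=\sqrt{1-\langle v_i,v_j\rangle^2}$; writing $t_{ij}=\langle v_i,v_j\rangle$, we have $t_{ii}=1$ and $S_{ii}=0$, while $|t_{ij}|<1$ for $i\neq j$ precisely because no two $v_i$ are parallel. Using $\sqrt{1-u}=1-\sum_{k\geq1}c_ku^k$ with $c_k=(-1)^{k+1}\binom{1/2}{k}>0$ and $\sum_{k\geq1}c_k=1$ (the last equality by Abel's theorem, letting $u\to1^-$), one obtains the matrix identity $S=J-\sum_{k\geq1}c_kT^{(2k)}$, where $J$ is the all-ones matrix; the diagonal is consistent because $1-\sum_k c_k=0$. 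Hence for every $x=(x_1,\dots,x_n)\in\mathbb{R}^n$,
$$
x^{\top}Sx=\Big(\sum_{i}x_i\Big)^2-\sum_{k\geq1}c_k\,x^{\top}T^{(2k)}x .
$$

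The next step is to record the positivity of the $T^{(m)}=(\langle v_i,v_j\rangle^m)_{i,j}$. By the multinomial identity stated before the lemma, $\langle v_i,v_j\rangle^m=\sum_{a+b+c=m}\frac{m!}{a!b!c!}f_{abc}(v_i)f_{abc}(v_j)$, so $T^{(m)}$ is a Gram matrix and
$$
x^{\top}T^{(m)}x=\sum_{a+b+c=m}\frac{m!}{a!b!c!}\Big(\sum_{i}x_if_{abc}(v_i)\Big)^2\geq0 .
$$
I then claim that $T^{(2(n-1))}$ is in fact positive definite. Fix an index $i$. For each $j\neq i$, since $v_i$ is not parallel to $v_j$ one can choose a unit vector $u_{ij}$ orthogonal to $v_j$ with $\langle v_i,u_{ij}\rangle\neq0$; then $H_i(v):=\prod_{j\neq i}\langle v,u_{ij}\rangle^2$ is a homogeneous polynomial of degree $2(n-1)$ that vanishes at each $v_j$ with $j\neq i$ and has $H_i(v_i)>0$. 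Writing $H_i=\sum_{a+b+c=2(n-1)}\lambda^{(i)}_{abc}f_{abc}$, if $x^{\top}T^{(2(n-1))}x=0$ then $\sum_i x_i f_{abc}(v_i)=0$ for all $a+b+c=2(n-1)$, so $0=\sum_{\ell}x_\ell H_i(v_\ell)=x_iH_i(v_i)$, forcing $x_i=0$; as $i$ was arbitrary, $x=0$.

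Finally I would assemble the signature. On the hyperplane $W=\{x:\sum_i x_i=0\}$, the displayed formula gives $x^{\top}Sx=-\sum_{k\geq1}c_k\,x^{\top}T^{(2k)}x\leq-c_{n-1}\,x^{\top}T^{(2(n-1))}x<0$ for every nonzero $x\in W$, since $c_{n-1}>0$ (here $n\geq2$) and $T^{(2(n-1))}$ is positive definite. Thus $S$ is negative definite on the $(n-1)$-dimensional subspace $W$, hence has at least $n-1$ negative eigenvalues. On the other hand $\operatorname{tr}S=0$ while $S\neq0$ (indeed $S_{12}=\|v_1\times v_2\|>0$), so $S$ also has at least one positive eigenvalue. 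Counting, $S$ must have exactly one positive eigenvalue, exactly $n-1$ negative eigenvalues, and no zero eigenvalue; that is, $S$ is non-singular with signature $(1,n-1)$.

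The one genuinely non-routine point is the positive definiteness of $T^{(2(n-1))}$, and this is exactly where the hypothesis that no two $v_i$ are parallel is used. The trick that makes it go through is to realize the Lagrange-type interpolating polynomials $H_i$ (nonzero at $v_i$, zero at the other $v_j$) as products of $n-1$ homogeneous quadratics $\langle v,u_{ij}\rangle^2$, so that each $H_i$ lies in the span of the degree-$2(n-1)$ monomials $f_{abc}$ and can be tested against $T^{(2(n-1))}$. The binomial-series bookkeeping and the closing inertia argument are standard.
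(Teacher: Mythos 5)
Your proposal is correct and follows essentially the same route as the paper: expand $\sqrt{1-\langle v_i,v_j\rangle^2}$ as a power series with positive coefficients, use the multinomial identity to see each $(\langle v_i,v_j\rangle^{2k})_{i,j}$ as a positive semi-definite Gram matrix, and force strict negativity on the hyperplane $\sum_i x_i=0$ via interpolating polynomials built as products of squared linear forms vanishing at the other $v_j$. The only (cosmetic) differences are that you isolate the single degree $2(n-1)$ to get a positive definite Gram matrix rather than using the vanishing conditions in all even degrees at once, and you produce the positive eigenvalue from $\operatorname{tr}S=0$ instead of testing against the all-ones vector.
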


\begin{proof}
    For the all-one vector $\mathbf{1}=(1,\dots,1)\in \mathbb{R}^n$, we have $\mathbf{1}^TS\mathbf{1}=\sum_{i,j}\|v_i\times v_j\|>0$.
    Let $H=\{(t_i)_i\in \mathbb{R}^n\mid \sum_i t_i=0\}$.
    It suffices to show that $S$ is negative definite on $H$.
    We have $\|v_i\times v_j\|=\sqrt{1-\langle v_i,v_j\rangle^2}$.
    The Taylor expansion of $\sqrt{1-x^2}$ is given by
    $$
        \sqrt{1-x^2} = 1-\sum_{k=1}^\infty \dfrac{1}{4^k(2k-1)}\binom{2k}{k} x^{2k},
    $$
    which converges for $-1\leq x\leq 1$.
    Therefore, the entries $s_{ij}$ of $S$ are given by
    \begin{align*}
        s_{ij} &{}= 1-\sum_{k=1}^\infty \dfrac{1}{4^k(2k-1)}\binom{2k}{k} \langle v_i,v_j\rangle^{2k}\\
        &{}=1-\sum_{k=1}^\infty\sum_{a+b+c=2k}C^{(k)}_{abc}f_{abc}(v_i)f_{abc}(v_j).\quad \left(C^{(k)}_{abc}:=\dfrac{1}{4^k(2k-1)}\binom{2k}{k}\dfrac{(2k)!}{a!b!c!}>0\right)
    \end{align*}
    For any vector $t=(t_i)_i\in \mathbb{R}^n$, we have
    \begin{align*}
        t^TSt = \sum_{i,j} s_{ij}t_it_j=\sum_{i,j}t_it_j - \sum_{i,j}\sum_{k=1}^\infty\sum_{a+b+c=2k}C^{(k)}_{abc}f_{abc}(v_i)f_{abc}(v_j)t_it_j.
    \end{align*}
    If $t\in H$, then the first term is $0$ and hence
    \begin{align*}
        t^TSt &{}= -\sum_{k=1}^\infty\sum_{a+b+c=2k}C^{(k)}_{abc}\left(\sum_{i=1}^nf_{abc}(v_i)t_i\right)^2\leq 0.
    \end{align*}
    This shows that $S$ is negative semi-definite on $H$.
    If $t^TSt=0$, then we have
    $$
        \sum_{i=1}^n f_{abc}(v_i)t_i=0
    $$
    for any $k\geq 0$ and any $a,b,c\geq 0$ with $a+b+c=2k$.
    This implies that for any polynomial $F\in \mathbb{R}[x,y,z]$ which has only even-degree terms, we have
    \begin{align}\label{eq1}
        \sum_{i=1}^n F(v_i)t_i=0.
    \end{align}
    For each $i\in \{1,2,\dots,n\}$, there is a linear function $f_i\colon \mathbb{R}^3\to \mathbb{R}$ such that $f_i(v_i)= 0$ and $f_i(v_j)\neq 0\ (i\neq j)$.
    Applying \eqref{eq1} for $F_i=\prod_{i\neq j}f_i^2$, we obtain $t_i=0$.
    This shows that $S$ is negative definite on $H$.
\end{proof}

\begin{remark}
    In the proof of \Cref{algebraic}, we expanded $\|v_i\times v_j\|$ as a power series in $\langle v_i,v_j\rangle$.
    Instead, we could also expand $\|v_i\times v_j\|$ by Legendre polynomials $P_\ell(\langle v_i,v_j\rangle)$ and use the addition theorem of spherical harmonic functions,
    $$
    P_\ell(\langle x,y\rangle)=\dfrac{4\pi}{2\ell+1}\sum_{-\ell\leq m\leq \ell} \overline{Y_\ell^m(x)}Y_\ell^m(y),
    $$
    to obtain the same result.
\end{remark}

\section{Proof of the main theorem}

Let $L,L'$ be two directed lines in $\mathbb{R}^3$ that are not coplanar.
Write $L=\mathbb{R}v+w$ and $L'=\mathbb{R}v'+w'$, where $v,v'\in \mathbb{R}^3$ are the unit vectors representing the direction of the line, and the vectors $w,w'\in \mathbb{R}^3$ are uniquely determined modulo $\mathbb{R}v$ (resp. $\mathbb{R}v'$).
We define the \emph{chirality} $\varepsilon(L,L')\in \{\pm 1\}$ by
$$
\varepsilon(L,L')=\sgn\langle v\times v',w-w'\rangle.
$$
One can easily see that this agrees with the intuitive definition of $\varepsilon(L,L')$ given in the introduction.
By definition, we have $\varepsilon(L,L')=\varepsilon(L',L)$.

Suppose that $L_1,\dots,L_n$ are directed lines in $\mathbb{R}^3$ such that no two of them are coplanar.
We define the \emph{chirality graph} of $\{L_1,\dots,L_n\}$ to be the signed complete graph on the vertex set $\{1,2,\dots,n\}$ whose sign on the edge $(i,j)$ is given by $\varepsilon(L_i,L_j)$.
If we reverse the direction of a line $L_i$, then the sign of the edges adjacent to the vertex $i$ will be reversed.

\begin{example}
    Consider the following configuration of directed lines:
\begin{center}
\begin{tikzpicture}[thick, scale=0.8]
  \draw (0,1) -- (3,1);
  \draw (3.4,1) -- (5,1);

  \draw (0,0) -- (0.85,0.85);
  \draw (1.1,1.1) -- (2.4,2.4);
  \draw (2.6,2.6) -- (4,4);

  \draw (3.2,0) -- (3.2,1.6);
  \draw (3.2,2) -- (3.2,3);
  \draw (3.2,3.4) -- (3.2,4);

  \draw (1,4) -- (3.85, 1.15);
  \draw (4.15,0.85) -- (5,0);

  \draw (4.85,0.85) -- (5,1) -- (4.85, 1.15);
  \draw (4,3.8) -- (4,4) -- (3.8,4);
  \draw (3.05,3.85) -- (3.2, 4) -- (3.35, 3.85);
  \draw (1,3.8) -- (1,4) -- (1.2, 4);

  \node at (-0.3,1) {$L_1$};
  \node at (-0.3,-0.3) {$L_2$};
  \node at (3.2,-0.3) {$L_3$};
  \node at (5.3,-0.3) {$L_4$};

  \node at (0,4.2) {};
\end{tikzpicture}
\end{center}
    In this case, the chirality graph of $\{L_1,\dots,L_4\}$ is given as follows, where a solid line (resp. dashed line) represents an edge with sign $+1$ (resp. $-1$).
\begin{center}
\begin{tikzpicture}
  \draw[thick] (0,0) -- (2,0);
  \draw[dashed] (2,0) -- (2,2);
  \draw[thick] (2,2) -- (0,2);
  \draw[thick] (0,2) -- (0,0);
  \draw[dashed] (0,0) -- (2,2);
  \draw[dashed] (0,2) -- (2,0);
  \node at (0,2) [above left] {$1$};
  \node at (0,0) [below left] {$2$};
  \node at (2,0) [below right] {$3$};
  \node at (2,2) [above right] {$4$};
  \fill[black] (0,0) circle (2pt);
  \fill[black] (0,2) circle (2pt);
  \fill[black] (2,0) circle (2pt);
  \fill[black] (2,2) circle (2pt);
\end{tikzpicture}
\end{center}
\end{example}

\begin{example}
Boz\'oki, Lee, and R\'onyai \cite{BLR} constructed two distinct configurations of $7$ directed lines $L_1,\dots,L_7$ in $\mathbb{R}^3$ such that $d(L_i,L_j)=2$ holds for every $1\leq i<j\leq 7$.
The corresponding chirality graphs are given as in the figure below.
\begin{center}
\begin{tikzpicture}[scale=0.7]
  \foreach \i/\angle in {
    1/90,
    2/141,
    3/193,
    4/244,
    5/-64,
    6/-13,
    7/39
  }{
    \coordinate (\i) at (\angle:3cm);
    \node at (\angle:3.5cm) {\i};
    \fill[black] (\i) circle (2pt);
  }

  \foreach \i/\j in {1/5, 1/6, 2/5, 2/7, 3/4, 3/7, 4/6, 5/6, 5/7, 6/7}{
    \draw[thick] (\i) -- (\j);
  }
  \foreach \i/\j in {1/2, 1/3, 1/4, 1/7, 2/3, 2/4, 2/6, 3/5, 3/6, 4/5, 4/7}{
    \draw[dashed] (\i) -- (\j);
  }
  
  \begin{scope}[xshift=8cm]
  \foreach \i/\angle in {
    1/90,
    2/141,
    3/193,
    4/244,
    5/-64,
    6/-13,
    7/39
  }{
    \coordinate (\i) at (\angle:3cm);
    \node at (\angle:3.5cm) {\i};
    \fill[black] (\i) circle (2pt);
  }

  \foreach \i/\j in {1/4, 2/4, 2/5, 3/6, 3/7, 4/5, 4/6, 4/7, 5/6, 6/7}{
    \draw[thick] (\i) -- (\j);
  }
  \foreach \i/\j in {1/2, 1/3, 1/5, 1/6, 1/7, 2/3, 2/6, 2/7, 3/4, 3/5, 5/7}{
    \draw[dashed] (\i) -- (\j);
  }
  \end{scope}
\end{tikzpicture}
\end{center}
After reversing the direction of some of the lines, both graphs become isomorphic to the following graph:
\begin{center}
\begin{tikzpicture}[scale=0.7]
  \foreach \i/\angle in {
    1/90,
    2/141,
    3/193,
    4/244,
    5/-64,
    6/-13,
    7/39
  }{
    \coordinate (\i) at (\angle:3cm);
    \fill[black] (\i) circle (2pt);
  }

  \foreach \i/\j in {1/2, 1/3, 1/4, 1/5, 1/6, 1/7, 2/4, 2/5, 2/6, 2/7, 3/5, 3/6, 3/7, 4/6, 4/7, 5/7}{
    \draw[thick] (\i) -- (\j);
  }
  \foreach \i/\j in {2/3, 3/4, 4/5, 5/6, 6/7}{
    \draw[dashed] (\i) -- (\j);
  }
  \node at (0,3.1) {};
\end{tikzpicture}
\end{center}
\end{example}

\begin{proof}[Proof of \Cref{directed}]
Assume, for the sake of contradiction, that the chirality graph of $\{L_1,\dots,L_n\}$ contains a monochromatic $K_5$.
Without loss of generality, we may assume that $n=5$ and that $\varepsilon(L_i,L_j)=+1$ for $1\leq i<j\leq 5$.
Write $L_i=\mathbb{R}v_i+w_i$, where $v_i\in \mathbb{R}^3$ is the unit vector representing the direction of $L_i$, and the vector $w_i\in \mathbb{R}^3$ is uniquely determined modulo $\mathbb{R}v_i$.
By our assumption, no two of the vectors $v_1,\dots,v_5$ are parallel.
For $1\leq i<j\leq 5$, we have
\begin{align*}
d(L_i,L_j)&{}=\min_{s,t\in \mathbb{R}}\|(sv_i-tv_j)+(w_i-w_j)\|\\
&{}=\biggl|\biggl\langle \dfrac{v_i\times v_j}{\|v_i\times v_j\|}, w_i-w_j\biggr\rangle\biggr|,
\end{align*}
so our assumption that $d(L_i,L_j)=1$ shows that
\begin{align*}
    \langle v_i\times v_j, w_i-w_j\rangle = \pm \|v_i\times v_j\|.
\end{align*}
The sign of the left hand side is $\varepsilon(L_i,L_j)$ by definition.
Therefore, our assumption that $\varepsilon(L_i,L_j)=+1$ shows that the right hand side is $\|v_i\times v_j\|$.
Let $S=(s_{ij})$ be a $5\times 5$ matrix whose entries are given by
$$
    s_{ij} = \langle v_i\times v_j, w_i-w_j\rangle = \|v_i\times v_j\|.
$$
By \Cref{algebraic}, the matrix $S$ is non-singular and has signature $(1,4)$.
On the other hand, if we set $q_i=w_i\times v_i\in \mathbb{R}^3$, then we have
\begin{align*}
    s_{ij} = \langle v_i\times v_j, w_i\rangle - \langle v_i\times v_j, w_j\rangle = \langle q_i, v_j\rangle + \langle v_i,q_j\rangle.
\end{align*}
Therefore, if we define a $6\times 6$ symmetric matrix $B$ by
$$
B=\begin{pmatrix}
    0&I_3\\I_3&0
\end{pmatrix},
$$
where $I_3$ is the $3\times 3$ identity matrix, then we have
$$
s_{ij} = (q_i^T\ v_i^T)B\begin{pmatrix}
    q_j\\v_j
\end{pmatrix}.
$$
Define a linear map $i\colon \mathbb{R}^5\to \mathbb{R}^6$ by $e_i\mapsto (q_i,v_i)$.
The above formula shows that $S$ is the matrix representing the bilinear form $i^*B$ on $\mathbb{R}^5$.
Since the signature of $B$ is $(3,3)$, this shows that the number of negative eigenvalues of $S$ cannot exceed $3$.
This contradicts the previous conclusion that $S$ has signature $(1,4)$.
\end{proof}

\begin{proof}[Proof of \Cref{main}]
Let $L_1,\dots,L_n$ be lines in $\mathbb{R}^3$ such that
$d(L_i,L_j)=1$ holds for $1\leq i<j\leq n$.
First, suppose that there is a pair of parallel lines among $L_1,\dots,L_n$.
Without loss of generality, we may assume that
\[
L_1=\mathbb{R}e_1+\dfrac{1}{2}e_2,\quad L_2=\mathbb{R}e_1-\dfrac{1}{2}e_2.
\]
If there exists an $i\geq 3$ that satisfies
\[
L_i = \mathbb{R}e_1\pm \dfrac{\sqrt{3}}{2}e_3,
\]
then there does not exist a line that is at distance $1$ from all of $L_1, L_2, L_i$ and hence $n \leq 3$.
Otherwise, the lines $L_3,\dots,L_n$ must be contained in $z=\pm 1$, and one can easily see that $n\leq 4$ in this case.

Next, suppose that no two of the lines $L_1,\dots,L_n$ are parallel.
We fix a direction of $L_i$ for each $i\in \{1,2,\dots,n\}$.
By reversing the direction of $L_2,\dots,L_n$ if necessary, we may assume that $\varepsilon(L_1,L_i)=+1$ for $i\in \{2,3,\dots,n\}$.
Consider the chirality graph of $\{L_2,\dots,L_n\}$.
By \Cref{directed}, this signed graph does not contain a monochromatic $K_4$; otherwise, possibly after reversing the direction of $L_1$, we obtain a monochromatic $K_5$ inside the chirality graph of $\{L_1,\dots,L_n\}$.
By Ramsey's theorem, this implies that $n-1 < R(4,4)=18$ and hence $n\leq 18$.
\end{proof}

\begin{remark}
In this paper, we proved that $N \leq 18$ using only the constraint that the chirality graph does not contain a monochromatic $K_5$, but it may be possible to improve this bound by identifying additional forbidden subgraphs.
For example, P.\! Pikhitsa and S.\! Pikhitsa \cite{PP} suggest, without providing a rigorous proof, that the following signed graph called $P_{250}$ is likely to be a forbidden subgraph.

\begin{center}
\begin{tikzpicture}[scale=0.7]
  \foreach \i/\angle in {
    1/90,
    2/141,
    3/193,
    4/244,
    5/-64,
    6/-13,
    7/39
  }{
    \coordinate (\i) at (\angle:3cm);
    \node at (\angle:3.5cm) {\i};
    \fill[black] (\i) circle (2pt);
  }

  \foreach \i/\j in {1/2, 1/3, 1/4, 1/5, 1/6, 1/7, 2/3, 2/4, 2/5, 2/6, 2/7, 3/5, 3/6, 4/6, 4/7, 5/7}{
    \draw[thick] (\i) -- (\j);
  }
  \foreach \i/\j in {3/4, 3/7, 4/5, 5/6, 6/7}{
    \draw[dashed] (\i) -- (\j);
  }
\end{tikzpicture}
\end{center}
\end{remark}

\printbibliography

\end{document}